\documentclass[12pt]{article}
\usepackage{amsmath,amssymb,amsthm}
\usepackage{amsfonts, amsmath}
\textheight21.0cm 
\textwidth16.5cm
\oddsidemargin0cm
\parindent0pt

\newtheorem{theorem}{Theorem}[section]
\newtheorem{lemma}{Lemma}[section]
\newtheorem{corollary}{Corollary}[section]

\theoremstyle{definition}
\newtheorem{definition}{Definition}[section]
\theoremstyle{remark}
\newtheorem{remark}{Remarks}[section]

\numberwithin{equation}{section}

 \title{ What Mathematical Theories of Truth Should be Like\\
 (and Can be)}
\author {Seppo Heikkil\"a\\
Department of Mathematical Sciences, University of Oulu\\
BOX 3000, FIN-90014, Oulu, Finland\\
E-mail: sheikki@cc.oulu.fi}

\begin{document}
\maketitle 
\noindent
\begin{abstract} 
\noindent
Hannes Leitgeb formulated  eight norms for theories of truth in his paper: `What Theories of Truth Should be Like (but Cannot be)'.
We shall  present in this paper a mathematical theory of truth for suitably constructed languages which contain  the first-order language of set theory, and prove that it satisfies all those norms.
\vskip12pt

\noindent {{\bf MSC:} 00A30, 03B10, 03C62, 03D60, 03F50, 06A07, 47H04, 47H10}
\vskip12pt

\noindent{\bf Keywords:} Set theory, Zermelo-Fraenkel, model, first--order, theory, language, sublanguage, formal, sentence, truth, provability, fixed point, chain, transfinite sequence.

\end{abstract}

\baselineskip 16pt
\section{Introduction}\label{S1} 

Hannes Leitgeb formulated in \cite{[16]} the following norms for theories of truth: 
\begin{itemize}
\item [(a)] Truth should be expressed by a predicate (and a theory of syntax should be available).  
\item[(b)] If a theory of truth is added to mathematical or empirical theories, it
should be possible to prove the latter true.
\item[(c)] The truth predicate should not be subject to any type restrictions.
\item[(d)] $T$-biconditionals should be derivable unrestrictedly. 
\item[(e)]  Truth should be compositional.
\item[(f)] The theory should allow for standard interpretations. 
\item[(g)]  The outer logic and the inner logic should coincide.
\item[(h)] The outer logic should be classical. 
\end{itemize}

We present a mathematical theory of truth (shortly MTT) that satisfies norms (a)--(h). It is formulated for suitably constructed languages which contain the first-order language $L=\{\in\}$ of set theory. They are sublanguages of $\mathcal L=\{\in,T\}$, where $T$ is a monadic predicate. While not entailing all formalities of first-order languages, they  are closed under logical connectives and quantifiers of first--order predicate logic. 

  Zermelo-Fraenkel  (ZF) set theory, sets of its minimal model $M$, concepts definable in $L$  and  classical two-valued logic  are main tools. MTT  proves ZF set theory modelled by $M$ true. As stated in \cite{Ho}, ``First-order
Zermelo-Fraenkel set theory is widely regarded as the standard of
mathematical correctness, in the sense that a proof is correct if and only if it
can be formalised as a formal proof in set theory."

As shown in \cite{[BG]}, ``with some elementary assumptions about truth and logic, a logical disaster ensues."
It is caused by Liar paradox. 
Paradoxes led Zermelo to axiomatize set theory. By \cite{[16]} Tarski ``excluded all Liar-like sentences from being
well-formed" from his theory of truth (cf. \cite{[20]}).
Construction of sublanguages enables  MTT  to satisfy also the following norms:
\begin{itemize}
\item[(i)] The mathematical theory of truth should be free from paradoxes.
\item[(j)] Truth should be defined for languages within the very languages themselves.
\end{itemize}

MTT is  immune to `Tarski's Commandment' (cf.  \cite{[MG]}), to  Tarski's Undefinability Theorem  (cf. \cite{[20]}), to Tarskian hierarchies (cf. \cite{[H]}), and to `Revenge of the Liar' (cf. \cite{Fe}).      

Model $M$ can be replaced by any  countable  model of ZF  or
other set theory. Examples of such theories   are presented, e.g.,  in \cite{[Ku]}. 
The language $L$ can be replaced by the language of arithmetic, and $M$ by a standard model of arithmetic. In \cite{[9]} MTT is extended to a more general class of languages.


PKF theory (cf. \cite{[Ha],[8]}) axiomatizes Kripke's theory of truth (cf. \cite{[15]}). Both inner and outer logic of PKF are partial, whence norms (h) and (j) are not valid.   
Kripke's theory don't satisfy norms (d) and (g) because of partially interpreted truth predicate and three-valued inner logic.    

Theory KF, which axiomatizes Kripke's closed-off models (cf. \cite{[F],Fe,[15]}),  don't satisfy norms (d), (g) and (i).   
 Norm (g) is rejected in \cite{Fe}. 
 Preferring norm (g) over any mixture of two different logics, and (h) over partial logic, we use classical logic and construct sublanguages. 
 Modifying Wittgenstein: Whereof one must not speak, thereof one can be silent.

\section{Preliminaries}\label{S3} 

As for the used terminology, cf. e.g.,  \cite{[11],[Ku]}. Denote by $M$  the minimal, countable, transitive and well-founded  model of ZF set theory constructed in \cite{[4]}. It follows from \cite[Lemma II.8.22]{[Ku]} that for each sentence $A$ of the language $L=\{\in\}$  either $A$ or its negation $\neg A$ is
true in $M$ in the sense defined in \cite{[Ku]}  (cf. \cite[II.7 and p. 237]{[Ku]}). 
 $M$ contains by \cite{[4],[13]} natural numbers.
Assume that their names, called numerals, are defined in $L$, e.g., as in  \cite{[Fi]}. 

First we shall  construct a family of sublanguages for the language $\mathcal L=\{\in,T\}$, where $T$ is a monadic predicate. 
Denote by  $D$ the set of
G\"odel numbers of sentences of $\mathcal L$ in its  fixed G\"odel numbering.  Let \#$A$
 denote  the G\"odel number of a sentence $A$ of $\mathcal L$, and $\left\lceil A\right\rceil$ the numeral of \#$A$. 
Given a subset $U$ of  $D$,
define subsets  $G(U)$ and $F(U)$ of $D$ by following rules (iff abbreviates if and only if):   
 \begin{enumerate}
 \item[(r1)] If $A$ is in $L$, then  \#$A$ is in  $G(U)$ iff $A$ is true in $M$, and in $F(U)$ iff $\neg A$ is true in $M$.  
 \item[(r2)]  If $A$ is in $\mathcal L$, then  \#$T(\left\lceil A\right\rceil)$ is in $G(U)$ iff \#$A$  is in $U$, and in $F(U)$ iff  
\#[$\neg A$]  is in $U$.  
\end{enumerate}  
Sentences determined by rules (r1) and (r2), i.e., all sentences $A$ of $L$ and those sentences $T(\left\lceil A\right\rceil)$ of $\mathcal L$ for which \#$A$ or \#[$\neg A$]  is in $U$, are called {\em basic sentences}. 

Next five rules deal with logical connectives. Let $A$ and $B$ be sentences of $\mathcal L$. 
\begin{enumerate}
 \item[(r3)] Negation rule: \#[$\neg A$] is in $G(U)$ iff \#$A$ is in $F(U)$, and in $F(U)$ iff \#$A$ is in $G(U)$. 
 \item[(r4)] Disjunction rule: \#[$A\vee B$]
 is  in $G(U)$ iff  \#$A$ or \#$B$ is in $G(U)$, and in $F(U)$ iff \#$A$ and \#$B$
are in $F(U)$. 
 \item[(r5)] Conjunction rule: \#[$A\wedge B$]
is in $G(U)$ iff \#[$\neg A\vee \neg B$] is in $F(U)$ iff (by (r3) and (r4))  both \#$A$ and \#$B$ are in $G(U)$. Similarly, \#[$A\wedge B$] is in $F(U)$ iff \#[$\neg A\vee \neg B$] is in $G(U)$ iff  \#$A$ or \#$B$ is in $F(U)$.
\item[(r6)] Implication rule: \#[$A\rightarrow B$] is in $G(U)$ iff \#[$\neg A\vee B$] is in $G(U)$ iff (by (r3) and (r4))  \#$A$ is in  $F(U)$ or \#$B$ is in $G(U)$.  \#[$A\rightarrow B$] is  in $F(U)$ iff \#[$\neg A\vee B$] is in $F(U)$ iff \#$A$ is in $G(U)$ and \#$B$ is in $F(U)$.
\item[(r7)] Biconditionality rule:
\#[$A \leftrightarrow B$] is in $G(U)$ iff \#$A$ and \#$B$ are both in $G(U)$ or both in $F(U)$, and  in $F(U)$ iff  \#$A$ is in $G(U)$ and 
\#$B$ is in $F(U)$, or \#$A$ is in $F(U)$ and \#$B$ is in $G(U)$.
\end{enumerate}
When $A(x)$ is a formula in $L$, then quantifications $\exists xA(x)$ and $\forall xA(x)$ are sentences of $L$. Hence rule (r1) is applicable for them.
So it suffices to set rules for $\exists xT(x)$ and $\forall xT(x)$. Assume that the set $X$ of numerals of
G\"odel numbers of sentences of $\mathcal L$ is the intended domain of discourse for $T$. We set the following rules: 
\begin{enumerate}
\item[(r8)] \#[$\exists xT(x)$] is in $G(U)$ iff \#$T(\mathbf n)$ is in $G(U)$ for some $\mathbf n\in X$, and \#[$\exists xT(x)$] is in $F(U)$ iff  \#$T(\mathbf n)$ is in $F(U)$ for every $\mathbf n\in X$.
\item[(r9)] \#[$\forall xT(x)$] is in $G(U)$ iff \#$T(\mathbf n)$ is in $G(U)$ for every  $\mathbf n\in X$, and \#[$\forall xT(x)$] is in $F(U)$ iff \#$T(\mathbf n)$ is in $F(U)$ at least for one $\mathbf n\in X$.
\end{enumerate}
Rules (r1)--(r9) and induction on the complexity of formulas
determine uniquely subsets  $G(U)$ and $F(U)$ of $D$ whenever $U$ is a subset of $D$. 
Sublanguages $\mathcal L_U$ of $\mathcal L$ formed by those sentences $A$ of $\mathcal L$ for which   \#$A$ is in $G(U)$ or in $F(U)$ contain all sentences of $L$ and are closed under logical connectives and quantifiers.

We say that a subset $U$ of  $D$ is  {\em consistent} iff  both \#$A$ and \#[$\neg A$] are not in $U$ for any sentence $A$ of $\mathcal L$. For instance, the empty set $\emptyset$ is consistent.

The following two lemmas have counterparts in \cite{[10]}.

\begin{lemma}\label{L01} Let $U$ be a consistent subset of $D$. Then $G(U)\cap F(U)=\emptyset$,
 i.e., every sentence of $\mathcal L_U$ belongs either to $G(U)$ or to $F(U)$.
\end{lemma}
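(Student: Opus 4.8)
The plan is to reformulate the claim as follows: for every sentence $A$ of $\mathcal L$ that receives a value through the rules (r1)--(r9), it is \emph{not} the case that both $\#A\in G(U)$ and $\#A\in F(U)$. Granting this, the stated equality $G(U)\cap F(U)=\emptyset$ is immediate, and the rephrasing is then automatic: since $\mathcal L_U$ consists by definition of exactly those sentences $A$ with $\#A\in G(U)$ or $\#A\in F(U)$, disjointness upgrades this inclusive ``or'' to an exclusive one, so each sentence of $\mathcal L_U$ lands in precisely one of $G(U)$, $F(U)$ (the ``at least one'' half of the partition is built into the definition of $\mathcal L_U$, and for $L$-sentences it is underwritten by the dichotomy quoted from \cite[Lemma II.8.22]{[Ku]}). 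The natural way to establish the reformulated claim is induction on the complexity of $A$, measured by the number of logical connectives and quantifiers exhibited by the rules, with base cases (r1) and (r2).

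For the base cases I would argue as follows. If $A$ is a sentence of $L$, then by (r1) we have $\#A\in G(U)$ iff $A$ is true in $M$ and $\#A\in F(U)$ iff $\neg A$ is true in $M$; since truth in $M$ obeys classical two-valued semantics, $A$ and $\neg A$ cannot both be true in $M$, so $\#A$ cannot lie in both $G(U)$ and $F(U)$. If instead $A$ is an atomic $T$-sentence $T(\left\lceil B\right\rceil)$, then by (r2) membership of $\#A$ in $G(U)$ means $\#B\in U$ while membership in $F(U)$ means $\#[\neg B]\in U$; were both to hold, $U$ would contain $\#B$ and $\#[\neg B]$ simultaneously, contradicting the consistency of $U$. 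This is the single point at which the consistency hypothesis is genuinely needed.

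For the inductive step I would treat each connective and quantifier in turn, in every case reading off the defining rule and pushing the putative double membership down to a strictly simpler sentence, where the induction hypothesis applies. For negation (r3): if $\#[\neg A]$ lay in $G(U)\cap F(U)$, then $\#A$ would lie in $F(U)\cap G(U)$, contradicting the hypothesis for $A$. For disjunction (r4): double membership of $\#[A\vee B]$ forces ($\#A\in G(U)$ or $\#B\in G(U)$) together with ($\#A\in F(U)$ and $\#B\in F(U)$), so one of $A$, $B$ is a counterexample of lower complexity. The cases (r5)--(r7) for $\wedge$, $\rightarrow$, $\leftrightarrow$ either reduce verbatim to these two through the equivalences already recorded in the rules, or are checked directly by the same bookkeeping. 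For the quantifiers, double membership of $\#[\exists xT(x)]$ under (r8) yields $\#T(\mathbf n)\in G(U)$ for some $\mathbf n\in X$ and $\#T(\mathbf n)\in F(U)$ for every $\mathbf n\in X$, so the chosen witness already violates the hypothesis for the atomic sentence $T(\mathbf n)$; the case (r9) for $\forall xT(x)$ is symmetric.

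I expect no serious difficulty beyond organisation; the one point requiring care is the choice of complexity measure so that the induction is well founded. In particular, an atomic $T$-sentence $T(\left\lceil B\right\rceil)$ must be treated as a base case even though the coded sentence $B$ may be arbitrarily complicated: rule (r2) never unfolds $B$, it only consults $U$, so the ``hidden'' complexity of $B$ does not enter the recursion and consistency of $U$ alone suffices. One should also note that the rules overlap harmlessly on $L$-sentences (for $A\in L$, applying (r1) to $\neg A$ agrees with applying (r3) and then (r1) to $A$), so taking (r1) as the base clause for all of $L$ leaves $G(U)$ and $F(U)$ unambiguous and the induction intact.
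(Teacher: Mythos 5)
Your proposal is correct and follows essentially the same route as the paper: induction on the complexity of sentences with (r1) and (r2) as base cases (consistency of $U$ entering only at the (r2) case), followed by case-by-case reduction through (r3)--(r9). The only divergence is in the quantifier cases, where you derive the contradiction directly from the existential (resp.\ universal) witness and the already-established base case for atomic $T$-sentences, whereas the paper instead proves the stronger facts that for consistent $U$ the sentence $\exists xT(x)$ is never in $F(U)$ and $\forall xT(x)$ is never in $G(U)$ (facts it silently reuses later in the proof of its monotonicity lemma); your shorter argument is equally valid for the present lemma.
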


\begin{proof} If $A$ is in $L$, then  by rule (r1) and \cite[Lemma II.8.22]{[Ku]}, \#$A$ is not in $G(U)\cap F(U)$.

By rule (r2) \#$T(\left\lceil A\right\rceil)$ is in $G(U)$ iff \#$A$ is in $U$, and in $F(U)$ iff 
\#[$\neg A$] is in $U$. Thus \#$T(\left\lceil A\right\rceil)$
cannot be both in $G(U)$ and in $F(U)$ because $U$ is consistent.

Make an  induction hypothesis:
\begin{enumerate}
\item[(h0)] $A$ and $B$ are such sentences of $\mathcal L$ that neither \#$A$ nor \#$B$ is in $G(U)\cap F(U)$. 
\end{enumerate}

As shown above, (h0) holds if $A$ and $B$ are basic sentences.  

If \#[$\neg A$] is in $G(U)\cap F(U)$, then  \#$A$ is in $F(U)\cap G(U)$. Hence, if (h0) holds, then  \#[$\neg A$] is not in $G(U)\cap F(U)$.  

If \#[$A\vee B$] is in $G(U)\cap F(U)$,  then \#$A$ or \#$B$ is in $G(U)$, and both \#$A$ and \#$B$ are in $F(U)$ by (r4), so that \#$A$ or 
\#$B$ is in $G(U)\cap F(U)$.  Hence, if (h0) holds, then  \#[$A\vee B$] is not in $G(U)\cap F(U)$.  

\#[$A\wedge B$] cannot be  in $G(U)\cap F(U)$, for otherwise both \#$A$ and \#$B$ are in $G(U)$, and at least one of \#$A$ and \#$B$ is in $F(U)$, contradicting with (h0).

If  \#[$\neg A$] is in $G(U)\cap F(U)$, then \#$A$ is in $F(U)\cap G(U)$, and (h0) is not valid. Thus, under the hypothesis (h0) neither 
\#[$\neg A$] nor \#$B$ is in $G(U)\cap F(U)$. This result and the above result for disjunction imply that \#[$\neg A\vee B$], or equivalently, 
\#[$A\rightarrow B$], is not in  $G(U)\cap F(U)$. Similarly, \#[$A\leftrightarrow B$] is not in $G(U)\cap F(U)$, for otherwise,  \#$A$ or 
\#$B$ would be in $G(U)\cap F(U)$ by rule (r7), contradicting with (h0).
 
It remains to show that  \#[$\exists xT(x)$] and \#[$\forall xT(x)$]  are not in $G(U)\cap F(U)$. 

If $U$ is empty, then $T(\mathbf  n)$ is by rule (r2) neither in $G(U)$ nor in $F(U)$ for any $\mathbf n\in X$. Thus \#[$\exists xT(x)$] is by rule (r8)  neither in $G(U)$ nor in $F(U)$, and hence not in $G(U)\cap F(U)$. 

If $U$ is not empty, then \#$A$ is in $U$ for some $A$ in $\mathcal L$. Since $U$ is consistent, then \#$[\neg A]$ is not in $U$. By rule (r2), \#$T(\left\lceil A\right\rceil)$ is not in $F(U)$. Thus \#[$\exists xT(x)$] is by rule (r8) not in $F(U)$, and hence not in $G(U)\cap F(U)$.

Because $U$ is consistent, it is a proper subset of $D$. 
Thus there is $n\in D$ such that $n\not\in U$. But $n=\#A$ for some sentence $A$ of $\mathcal L$, whence \#$T(\mathbf n)=\#T(\left\lceil A\right\rceil)$
is not in $G(U)$ by rule (r2). Consequently, \#[$\forall xT(x)$] is by rule (r9) not in $G(U)$, and hence not in $G(U)\cap F(U)$.

The above results and induction on the complexity of formulas imply that \#$A$ is not in $G(U)\cap F(U)$ for any sentence $A$ of $\mathcal L$.  
\end{proof}

\begin{lemma}\label{L201} If $U$ is a consistent subset of $D$, then both $G(U)$ and $F(U$ are consistent.
\end{lemma}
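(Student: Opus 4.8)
The plan is to reduce the consistency of $G(U)$ and of $F(U)$ directly to Lemma \ref{L01} by exploiting the negation rule (r3). Recall that a subset of $D$ is consistent iff it never contains both \#$A$ and \#[$\neg A$] for a single sentence $A$ of $\mathcal L$. So I must show that neither $G(U)$ nor $F(U)$ harbours such a pair.

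First I would treat $G(U)$. Suppose, toward a contradiction, that both \#$A$ and \#[$\neg A$] lie in $G(U)$ for some sentence $A$ of $\mathcal L$. By the negation rule (r3), \#[$\neg A$] is in $G(U)$ iff \#$A$ is in $F(U)$. Hence \#$A$ would be in $G(U)$ and simultaneously in $F(U)$, that is, \#$A$ is in $G(U)\cap F(U)$. But $U$ is consistent, so Lemma \ref{L01} gives $G(U)\cap F(U)=\emptyset$, a contradiction. Therefore no such pair exists and $G(U)$ is consistent.

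The argument for $F(U)$ is the mirror image. Assuming both \#$A$ and \#[$\neg A$] are in $F(U)$, the other half of rule (r3)---namely that \#[$\neg A$] is in $F(U)$ iff \#$A$ is in $G(U)$---again forces \#$A$ to be in $G(U)\cap F(U)$, which Lemma \ref{L01} forbids. Thus $F(U)$ is consistent as well.

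I do not expect any genuine obstacle here: the entire content is packaged in Lemma \ref{L01}, and rule (r3) is precisely the device that converts an inconsistency inside $G(U)$ (or inside $F(U)$) into a forbidden overlap in $G(U)\cap F(U)$. The only point requiring care is to invoke the correct direction of the negation biconditional in each of the two cases, since one case uses ``\#[$\neg A$] in $G(U)$ iff \#$A$ in $F(U)$'' and the other uses the dual clause.
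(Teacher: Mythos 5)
Your proof is correct and follows essentially the same route as the paper: assume a contradictory pair in $G(U)$ (resp.\ $F(U)$), apply the appropriate half of rule (r3) to land \#$A$ in $G(U)\cap F(U)$, and contradict Lemma \ref{L01}. The paper even leaves the $F(U)$ case as ``similar,'' which you have simply written out explicitly.
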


\begin{proof} If $G(U)$ is not consistent, then there is such a sentence $A$ of $\mathcal L$, that \#$A$ and \#[$\neg A$] are in $G(U)$. Because \#[$\neg A$] is in $G(U)$, then \#$A$ is also in $F(U)$ by rule (r3), and hence in $G(U)\cap F(U)$. But then, by Lemma \ref{L01}, $U$ is not consistent.
Consequently, if $U$ is  consistent, then $G(U)$ is consistent. The proof that $F(U)$ is consistent if $U$ is, is similar.
\end{proof}

\section{A mathematical theory of truth and its properties}\label{S4}

Recall that  
$D$ denotes the set of G\"odel numbers of sentences of the language $\mathcal L=\{\in,T\}$. Given  a subset $U$ of  $D$, let $G(U)$ and $F(U)$ be the subsets  of $D$ constructed in Section \ref{S3}.
  
\begin{definition}\label{D41} {\it Assume that $U$ is a consistent subset of $D$, and that $U=G(U)$.  
Denote by $\mathcal L_U$ the language containing those sentences $A$ of $\mathcal L=\{\in,T\}$ for which   \#$A$ is in $G(U)$ or in $F(U)$.
A  sentence $A$ of $\mathcal L_U$ is interpreted as true iff \#$A$ is in $G(U)$, and as false iff \#$A$ is in $F(U)$.
$T$ is called a truth predicate for $\mathcal L_U$}.  \end{definition}

The existence of consistent subsets $U$ of $D$ satisfying $U=G(U)$, including the smallest one, is proved in Section \ref{S5}.
\smallskip

The main goal of this paper is to prove the following Theorem for the above defined mathematical theory of truth (shortly MTT).

\begin{theorem}\label{T31} Assume that $U$ is a consistent subset of $D$, and that $U=G(U)$. Then MTT satisfies all the norms (a)--(j) presented in the Introduction.
\end{theorem}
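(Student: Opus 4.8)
The plan is to treat the statement as a conjunction of ten independent claims and to verify each norm (a)--(j) in turn, drawing throughout on the construction via rules (r1)--(r9) together with Lemmas \ref{L01} and \ref{L201}. The two lemmas are the workhorses: Lemma \ref{L01} yields that, on $\mathcal L_U$, every sentence is either true or false but not both, and Lemma \ref{L201} guarantees that $G(U)$ and $F(U)$ remain consistent. Since $U=G(U)$, membership of \#$A$ in $U$ coincides with ``$A$ is true'', and this fixed-point identity is what links the object-level predicate $T$ to the interpretation.

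First I would dispose of the structural norms (a), (c), (e), (f), (h), (j), which read off almost directly from the setup. Norm (a) holds because $T$ is the monadic predicate of $\mathcal L=\{\in,T\}$ and the G\"odel numbering, numerals, and syntactic operations are all available inside $M$. Norm (c) holds because rule (r2) is stated for \emph{every} sentence $A$ of $\mathcal L$, with no stratification of $T$ into levels, so no type restriction is imposed. Norm (e) is immediate: rules (r3)--(r9) assign the truth value of each compound from the values of its immediate constituents, which is precisely compositionality. Norm (f) holds because $M$ is transitive and well-founded, so its numerals and $\in$ carry their standard meaning. Norm (h) holds because the whole construction takes place in two-valued classical reasoning inside $M$. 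Norm (j) holds because $T$ already belongs to $\mathcal L$ and $\mathcal L_U\subseteq\mathcal L$, so truth for $\mathcal L_U$ is expressed inside $\mathcal L_U$ rather than in a separate metalanguage, sidestepping the Tarskian hierarchy.

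Next I would verify the three substantive norms (b), (d), (g). For (b), each axiom $A$ of ZF is true in $M$, so by (r1) \#$A\in G(U)=U$, whence by (r2) \#$T(\left\lceil A\right\rceil)\in G(U)$; thus the theory proves every axiom, and hence the modelled mathematics, true. For (d) I would show that $A$ and $T(\left\lceil A\right\rceil)$ always share their status: by (r2) and $U=G(U)$ one has \#$A\in G(U)$ iff \#$T(\left\lceil A\right\rceil)\in G(U)$, and by (r2), (r3) together with $U=G(U)$ one has \#$A\in F(U)$ iff \#$[\neg A]\in U$ iff \#$T(\left\lceil A\right\rceil)\in F(U)$. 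Rule (r7) then places \#$[T(\left\lceil A\right\rceil)\leftrightarrow A]$ in $G(U)$ for every $A$ with \#$A\in G(U)\cup F(U)$, i.e., the $T$-biconditional is true without restriction on $\mathcal L_U$. For (g), Lemma \ref{L01} gives bivalence on $\mathcal L_U$, so the logic governing sentences under $T$ is the same classical two-valued logic used in the metatheory; inner and outer logic therefore coincide, and by (h) both are classical.

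The main obstacle I expect is norm (i), freedom from paradox, which is where the consistency of $U$ and the fixed-point equation $U=G(U)$ must genuinely be used. The strategy is to observe that a Liar-type sentence $\lambda$ (one for which membership of \#$\lambda$ in $G(U)$ would force \#$\lambda\in F(U)$ and conversely) cannot lie in $G(U)\cup F(U)$: by Lemma \ref{L01} no sentence of $\mathcal L_U$ is in $G(U)\cap F(U)$, so such a $\lambda$ is simply not a sentence of $\mathcal L_U$ at all. Thus the diagonal construction that collapses naive truth theories fails to yield any sentence of the sublanguage, and together with Lemma \ref{L201} this keeps $G(U)$ and $F(U)$ consistent; the paradoxical sentences are excluded rather than adjudicated. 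The one point requiring care, and the reason the existence of a consistent fixed point $U=G(U)$ is deferred to Section \ref{S5}, is that this argument is only vacuously satisfactory until such a $U$ is shown to exist; granting that, norms (a)--(j) all follow.
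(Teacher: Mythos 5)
Your proposal is correct and follows essentially the same route as the paper: a norm-by-norm verification using rules (r1)--(r9), Lemmas \ref{L01} and \ref{L201}, and the fixed-point identity $U=G(U)$ (most substantively for (b) and (d)). The only cosmetic differences are that the paper derives (g) from the unrestricted $T$-biconditionals of (d) rather than from bivalence, and handles (i) by simply noting that every sentence of $\mathcal L_U$ is either true or false, deferring the explicit point about Liar sentences being excluded from the sublanguage to Remark \ref{R41}.
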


\begin{proof} (a) The syntax of MTT is comprised by logical symbols of first-order predicate logic, nonlogical symbols $\in$ and $T$, natural numbers as constants, numerals as terms, and variables ranging in $M$.   $T$ is the truth predicate for $\mathcal L_U$, by Definition \ref{D41}.

(b)  Assume that $A$ is a sentence of $L$. By \cite[Lemma II.8.22]{[Ku]}  either $A$  is true in $M$, or $\neg A$ is true in $M$ ($A$ is false in $M$). 

-- $A$ is true in $M$ iff \#$A$ is in $G(U)$, by rule (r1), iff \#$A$ is in $U$, because $U=G(U)$, iff  \#$T(\left\lceil A\right\rceil)$ is in $G(U)$ by rule (r2), iff $T(\left\lceil A\right\rceil)$ is true,  by Definition \ref{D41}.

-- $\neg A$ is true in $M$   iff  \#[$\neg A$] is in $G(U)$, by rule (r1), iff \#[$\neg A$] is in $U$, because $U=G(U)$,  iff  \#$\left\lceil T(A)\right\rceil$ is in $F(U)$, by rule (r2), iff $T(\left\lceil A\right\rceil)$ is false, by Definition \ref{D41}.   
   
Consequently, a sentence $A$ of $L$ is true in $M$ iff $T(\left\lceil A\right\rceil)$ is true, and false in $M$ iff $T(\left\lceil A\right\rceil)$ is false.
Thus MTT proves the $ZF$ set theory modelled by $M$ true.

(c) $T$ is by Definition \ref{D41} a truth predicate for $\mathcal L_U$, and is not subject to any restrictions within $\mathcal L_U$.

(d)  Let $A$ be a sentence of $\mathcal L_U$.  Since $U$ is consistent, then \#$A$ is by Lemma \ref{L01} either in $G(U)$ or in $F(U)$.  
Applying rules (r2) and (r3), and the assumption  $U=G(U)$, we obtain
\newline
-- \#$A$ is in $G(U)$ iff \#$A$ is in $U$  iff 
\#$T(\left\lceil A\right\rceil)$ is in $G(U)$;
\newline
-- \#$A$ is in $F(U)$ iff \#[$\neg A$] is in $G(U)$ iff \#[$\neg A$] is in $U$  iff \#$T(\left\lceil A\right\rceil)$ is in $F(U)$.  
\newline
Consequently, \#$A$ and \#$T(\left\lceil A\right\rceil)$ are both either in $G(U)$ or in $F(U)$. Thus \#[$A\leftrightarrow T(\left\lceil A\right\rceil)$] is by rule (r7) in $G(U)$, so that $A\leftrightarrow T(\left\lceil A\right\rceil)$ is true  by Definition \ref{D41}. This holds for every sentence $A$ of $\mathcal L_U$, whence $T$-biconditionals $A\leftrightarrow T(\left\lceil A\right\rceil)$ are derivable unrestrictedly in $\mathcal L_U$.

(e) It follows from Definition \ref{D41} that `\#$A$ is in $G(U)$' can be replaced by `$A$ is true' and `\#$A$ is in $F(U)$' by 
`$A$ is false'. 
Thus rules (r3)--(r9) imply that the truth in MTT is compositional.

(f) MTT allows for standard interpretations. $M$ is by \cite{[13]} a standard model of set theory. 

(g)  The outer logic and the inner logic coincide in MTT because of (d), as stated \cite{[16]}. 

(h)  By the proof of (e) the logic of $\mathcal L_U$, i.e., the inner logic of MTT is classical. This and (g) imply that also the outer logic of MTT is classical. 

(i) Since $U$ is consistent, it follows from Lemmas \ref{L01} and \ref{L201} that every sentence of $\mathcal L_U$ is either true or false in the sense of Definition \ref{D41}. Thus MTT is free from paradoxes. Only mathematical methods are used in it's formulation, whence it is a mathematical theory. 

(j) By Definition \ref{D41} truth for the language is defined within the language $\mathcal L_U$ itself.
\end{proof}
 
\begin{remark}\label{R41} If $\theta$ is a monadic predicate of $L$, there is such a sentence $A$ in $L$ that biconditionality $A\leftrightarrow\neg\theta(\left\lceil A\right\rceil)$ is provable from axioms of ZF set theory (cf., e.g., \cite[Lemma IV.5.31]{[Ku]}). 
Thus $T$-biconditionality rule (d) does not hold in $L$ when $T=\theta$.  In particular, $L$ does not contain its truth predicate.
This holds also for the language $\mathcal L$.
To satisfy all norms (a)--(j) it is then essential that those languages $\mathcal L_U$  for which MTT is formulated differ both from $L$ and from $\mathcal L$.
This holds in our approach, since  $G(U)\cup F(U)$ is a proper and nonempty subset of $D$ when $U$ is a consistent fixed point of $G$.

The language $\mathcal L_U$ corresponding to the smallest consistent fixed point $U$ of $G$ relates to the language of grounded sentences defined in \cite{[10],[15]}.
See also \cite{[6]}, where considerations are restricted to signed statements of $\mathcal L$.

MTT is  only a {\em theory} of truth depending, e.g., on the semantic notion 'true in $M$'. Moreover, the construction of $M$ starts from the set $\omega\cup\{\omega\}$, the existence of which is a matter of faith. 


\end{remark}  


\section{Appendix}\label{S5}

Let $\mathcal P$ denote the family of all consistent subsets of the set $D$ of 
G\"odel numbers of sentences of $\mathcal L$. Fixed points of  the set mapping $G:=U\mapsto G(U)$ from $\mathcal P$ to $\mathcal P$, i.e., those $U\in \mathcal P$ for which $U=G(U)$, have a central role in the formulation of MTT.  Before the proof of our main fixed point theorem we prove some 
auxiliary results.  

\begin{lemma}\label{L202} Let $U$ and $V$ be sets of $\mathcal P$, and assume that  $U \subseteq V$. If $A$ is a sentence of $\mathcal L$, then 
\#$A$ is  in $G(V)$ whenever it is in $G(U)$,
and \#$A$ is in $F(V)$ whenever it is in $F(U)$. 
\end{lemma}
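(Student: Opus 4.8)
The plan is to prove both assertions together by induction on the complexity of the sentence $A$, mirroring the recursive construction of $G(U)$ and $F(U)$ by rules (r1)--(r9). Because the negation, implication and biconditional rules interlock membership of a sentence in $G$ with membership of its components in $F$ (and conversely), the two claims cannot be separated: I would carry a single induction hypothesis asserting, for the immediate subsentences, that the implications ``\#$A\in G(U)$ $\Rightarrow$ \#$A\in G(V)$'' and ``\#$A\in F(U)$ $\Rightarrow$ \#$A\in F(V)$'' both hold, and likewise for $B$.

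For the base cases, first consider rule (r1): when $A$ is in $L$, membership of \#$A$ in $G(U)$ (respectively $F(U)$) is governed solely by whether $A$ (respectively $\neg A$) is true in $M$, a condition that does not mention $U$ at all; hence $G$ and $F$ agree on every $L$-sentence for all $U$, and the claim is immediate. Rule (r2) is the only place where $U$ itself enters: \#$T(\left\lceil A\right\rceil)$ lies in $G(U)$ iff \#$A$ is in $U$, and in $F(U)$ iff \#$[\neg A]$ is in $U$. Since $U\subseteq V$, membership in $U$ forces membership in $V$, so both implications follow directly from the hypothesis $U\subseteq V$.

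For the inductive step I would run through rules (r3)--(r7) in turn. In each, the defining condition for \#$A$ to lie in $G(U)$ (or in $F(U)$) is a positive Boolean combination --- built only from statements of the form ``a component lies in $G(U)$'' or ``a component lies in $F(U)$'', with no appeal to non-membership --- so applying the induction hypothesis to each component propagates the condition from $U$ to $V$. The quantifier rules (r8) and (r9) are handled the same way: each instance $T(\mathbf n)$ with $\mathbf n\in X$ is a basic sentence to which the (r2) base case applies, and since the domain $X$ is fixed independently of $U$ and $V$, the existential and universal conditions over $X$ transfer verbatim. Induction on the complexity of $A$ then completes both claims.

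The key point --- and the only place where genuine care is required --- is the observation that $U$ occurs \emph{positively} throughout the construction: it is referenced only through the membership conditions \#$A\in U$ and \#$[\neg A]\in U$ in (r2), never through their negations. This positivity is exactly what makes both $G$ and $F$ monotone with respect to inclusion, and it is a consequence of the symmetric, signed treatment of truth and falsity (the twin operators $G$ and $F$) rather than defining falsity as the complement of truth. I do not anticipate any hard step beyond the bookkeeping of the simultaneous induction; notice, moreover, that the consistency of $U$ and $V$ is not actually used in this argument, only the inclusion $U\subseteq V$.
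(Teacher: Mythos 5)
Your proposal is correct and follows the same route as the paper: a simultaneous induction on complexity carrying both implications (\#$A\in G(U)\Rightarrow{}$\#$A\in G(V)$ and \#$A\in F(U)\Rightarrow{}$\#$A\in F(V)$) for the components, with (r1) trivial, (r2) the only place $U\subseteq V$ is invoked, and the connective rules handled by positivity of the defining conditions. The one place where your argument genuinely diverges is the quantifier step: the paper disposes of the universal-type conditions (\#[$\forall xT(x)$]${}\in G$ and \#[$\exists xT(x)$]${}\in F$) by appealing to the proof of Lemma \ref{L01} to show they are never satisfied for consistent sets, whereas you observe that a universal condition over the fixed domain $X$ is itself monotone, since each instance $T(\mathbf n)=T(\left\lceil A\right\rceil)$ falls under the (r2) base case. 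Your version is slightly cleaner and more general --- it justifies your closing remark that consistency of $U$ and $V$ is never used, only the inclusion, whereas the paper's quantifier step as written does lean on consistency via Lemma \ref{L01}. Both arguments are sound for the lemma as stated (which assumes $U,V\in\mathcal P$).
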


\begin{proof} Assume that $U\subseteq V$. Consider first basic sentences. Let $A$ be a sentence of $L$. By rule (r1) \#$A$ is in $G(U)$ and also in $G(V)$ iff $A$ is true in $M$, and  both in $F(U)$ and in $F(V)$ iff $\neg A$ is true in $M$.
  
Let $A$ be a sentence of $\mathcal L$. If \#$T(\left\lceil A\right\rceil)$ is in $G(U)$, then \#$A$ is in $U$ by rule (r2). Because $U\subseteq V$, then \#$A$ belongs to $V$, whence \#$T(\left\lceil A\right\rceil)$ is in $G(V)$ by rule (r2).

If  \#$T(\left\lceil A\right\rceil)$ is in $F(U)$, then \#[$\neg A$] is in $U$ by rule (r2). Since $U\subseteq V$, then \#[$\neg A$] belongs to $V$, so that \#$T(\left\lceil A\right\rceil)$ is in $F(V)$ by rule (r2).

Thus all basic sentences satisfy the lemma.

Assume  that $A$ is a sentence of $\mathcal L$. If \#[$\neg A$] is in $G(U)$ but not in $G(V)$, then \#$A$ is in $F(U)$ but not in $F(V)$ by rule (r3). If \#[$\neg A$] is in $F(U)$ but not in $F(V)$, then \#$A$ is in $G(U)$ but not in $G(V)$ by rule (r3). Hence, if $A$ satisfies the lemma, then also $\neg A$ satisfies it.

Make an induction hypothesis:
\begin{enumerate}
\item[(h1)] $A$ and $B$ are such sentences of $\mathcal L$ that \#$A$ is in $G(V)$ if it is in $G(U)$,  \#$A$ is in $F(V)$ if it is in $F(U)$, 
\#$B$ is in $G(V)$ if it is in $G(U)$, and \#$B$ is in $F(V)$ if it is in $F(U)$. 
\end{enumerate}

If \#[$A\vee B$] is in $G(U)$, then \#$A$ or \#$B$ is in $G(U)$ by rule (r4). By (h1) \#$A$ or \#$B$ is in $G(V)$, so that \#[$A\vee B$] is in $G(V)$. If \#[$A\vee B$] is in $F(U)$, then  \#$A$ and \#$B$ are in $F(U)$ by rule (r4), and hence also in $F(V)$, by (h1), so that \#[$A\vee B$] is in $F(V)$.
Thus $A\vee B$ satisfies the lemma if (h1) holds.

If \#[$A\wedge B$] is in $G(U)$, then both \#$A$ and \#$B$ are in $G(U)$ by rule (r5), and hence also in $G(V)$, by (h1). Thus \#[$A\wedge B$] is in $G(V)$. If \#[$A\wedge B$] is in $F(U)$, then  \#$A$ or \#$B$ is in $F(U)$ by rule (r5), and hence also in $F(V)$, by (h1), whence \#[$A\wedge B$] is in $F(V)$.
Thus $A\wedge B$ satisfies the lemma if (h1) holds.

If \#[$A\rightarrow B$] is in $G(U)$, then \#[$\neg A$] or \#$B$ is in $G(U)$,  i.e., \#$A$ is in $F(U)$ or \#$B$ is in $G(U)$. Then, by (h1),  \#$A$ is in $F(V)$ or \#$B$ is in $G(V)$, i.e., \#[$\neg A$] or \#$B$ is in $G(V)$. Thus  \#[$A\rightarrow B$] is in $G(V)$. If \#[$A\rightarrow B$] is in $F(U)$, then \#$A$ is in $G(U)$ and \#$B$ is in $F(U)$. This implies by (h1) that \#$A$ is in $G(V)$ and \#$B$ is in $F(V)$, so that \#[$A\rightarrow B$] is in $F(V)$. 
Thus $A\rightarrow B$ satisfies the lemma if (h1) holds.

If \#[$A\leftrightarrow B$] is in $G(U)$, then \#$A$ and \#$B$ are both in $G(U)$ or both in $F(U]$, and hence both in $G(V)$ or both in $F(V)$ if (h1) holds, so that \#[$A\leftrightarrow B$] is in $G(V)$.
If \#[$A\leftrightarrow B$] is in $F(U)$, then \#$A$ is in $G(U)$ and \#$B$ is in $F(U)$ or vice versa. Thus \#$A$ is in $G(V)$ and 
\#$B$ is in $F(V)$ or vice versa  if (h1) holds, so that \#[$A\leftrightarrow B$] is in $F(V)$. Thus $A\leftrightarrow B$ satisfies the lemma if (h1) holds.
Because (h1)  holds for basic sentences, the above results and induction on the complexity of expressions imply that logical connectives of sentences of $\mathcal L$ satisfy the lemma. 

It follows from the proof of Lemma \ref{L01} that  \#[$\exists xT(x)]$ is neither in $F(U)$ nor in $F(V)$.  
If  \#[$\exists xT(x)$] is  in $G(U)$, then $T(\mathbf n)$ is in $G(U)$ for some $\mathbf n\in X$. Since $\mathbf n=\left\lceil A\right\rceil$ for some sentence $A$ of $\mathcal L$, then \#$T(\left\lceil A\right\rceil)$ is in $G(U)$. Thus \#$A$ is by rule (r2) in $U$, and hence also in $V$. Then, by rule (r2), \#$T(\left\lceil A\right\rceil)=\#T(\mathbf n)$ is in $G(V)$, so that \#[$\exists xT(x)]$ is in $G(V)$ by rule (r8). Consequently,   
\#[$\exists xT(x)$] is in $G(V)$ whenever it is in $G(U)$.

The proof of Lemma \ref{L01} implies  that \#[$\forall xT(x)$] is  neither in  $G(U)$ nor in $G(V)$. If \#[$\forall xT(x)$] is in $F(U)$, then $T(\mathbf n)$ is in $F(U)$ for some $\mathbf n\in X$. But $\mathbf n=\left\lceil A\right\rceil$ for some sentence $A$ of $\mathcal L$, so that \#$T(\left\lceil A\right\rceil)$ is in $F(U)$. So \#[$\neg A$] is by rule (r2) in $U$, and hence also in $V$. Thus, by rule (r2), \#$T(\left\lceil A\right\rceil)=\#T(\mathbf n)$ is in $F(V)$, so that \#[$\exists xT(x)]$ is in $F(V)$ by rule (r8). Consequently, \#[$\forall xT(x)$] 
is in $F(V)$ whenever it is in $F(U)$.
 
The above results  imply the conclusion of the lemma. 
\end{proof}

According to Lemma \ref{L201} the mapping $G:=U\mapsto G(U)$ maps $\mathcal P$ into $\mathcal P$.
Assuming  that $\mathcal P$ is ordered by inclusion, the above lemma implies the following result.

\begin{lemma}\label{L203}  $G$ is order preserving in $\mathcal P$, i.e., 
 $G(U)\subseteq G(V)$ whenever  $U$ and $V$ are  sets of  $\mathcal P$ and $U\subseteq V$.
\end{lemma}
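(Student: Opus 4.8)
The plan is to derive this immediately from Lemma \ref{L202}, which has just been established; the present statement is essentially a packaging of that lemma in the language of the inclusion order on $\mathcal P$, so I expect no genuine obstacle beyond unwinding definitions. The substantive work—the induction on the complexity of formulas handling the basic sentences, the connectives, and the two quantifier rules (r8) and (r9)—has already been carried out in the proof of Lemma \ref{L202}, and I would simply invoke it.

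First I would fix two sets $U,V\in\mathcal P$ with $U\subseteq V$, and recall that by construction $G(U)$ and $G(V)$ are subsets of $D$, the set of G\"odel numbers of sentences of $\mathcal L$. To establish $G(U)\subseteq G(V)$ it then suffices to take an arbitrary element of $G(U)$ and verify that it also lies in $G(V)$. Since that element belongs to $D$, it is of the form \#$A$ for some sentence $A$ of $\mathcal L$.

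Next I would apply Lemma \ref{L202} to the inclusion $U\subseteq V$: its conclusion states precisely that \#$A$ is in $G(V)$ whenever it is in $G(U)$. Hence the assumption that \#$A$ lies in $G(U)$ forces \#$A$ into $G(V)$. As \#$A$ was an arbitrary element of $G(U)$, this yields $G(U)\subseteq G(V)$, which is exactly the order preservation claimed.

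The only matter worth flagging is bookkeeping rather than difficulty. One should note, as recorded just before the statement, that $G$ maps $\mathcal P$ into $\mathcal P$—guaranteed by Lemma \ref{L201}, since $G(U)$ is consistent whenever $U$ is—so that the inclusion $G(U)\subseteq G(V)$ is indeed a comparison of two members of $\mathcal P$ under its order. The $F$-part of Lemma \ref{L202} plays no role in this argument; it would be needed only for the analogous monotonicity of the map $U\mapsto F(U)$, which is not asserted here.
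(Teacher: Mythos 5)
Your proposal is correct and matches the paper exactly: the paper states Lemma \ref{L203} without a separate proof, presenting it as an immediate consequence of Lemma \ref{L202} (with Lemma \ref{L201} guaranteeing that $G$ maps $\mathcal P$ into $\mathcal P$), which is precisely the unwinding you carry out. Nothing is missing.
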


As above lemmas, also the next lemma has a counterpart in \cite{[10]}.

\begin{lemma}\label{L204} 
(a) If $\mathcal W$ is a chain in $\mathcal P$, then the union   
$\cup \mathcal W=\cup\{U\mid U\in \mathcal W\}$ is  consistent.

(b) The intersection  $\cap \mathcal W=\cap\{U\mid U\in \mathcal W\}$ of every nonempty subfamily $\mathcal W$ of $\mathcal P$ is a  consistent subset of $D$.
\end{lemma}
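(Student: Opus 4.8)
The plan is to reduce both parts to a single elementary observation: consistency is inherited by subsets. Indeed, if $V\subseteq W$ and $W$ is consistent, then $V$ is consistent, since any sentence $A$ of $\mathcal L$ with both $\#A\in V$ and $\#[\neg A]\in V$ would equally witness the inconsistency of $W$. Part (b) follows at once from this: given a nonempty $\mathcal W\subseteq\mathcal P$, fix any $U_0\in\mathcal W$; then $\cap\mathcal W\subseteq U_0\subseteq D$, and since $U_0$ is consistent so is its subset $\cap\mathcal W$. I would emphasize that nonemptiness is exactly what is needed here, for otherwise the intersection would default to all of $D$, which is not consistent.

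For part (a) the subset trick no longer applies directly, because $\cup\mathcal W$ need not be contained in any single member. Instead I would argue by contradiction using the chain hypothesis. Suppose $\cup\mathcal W$ is inconsistent, so that there is a sentence $A$ of $\mathcal L$ with both $\#A\in\cup\mathcal W$ and $\#[\neg A]\in\cup\mathcal W$. By the definition of the union there are members $U_1,U_2\in\mathcal W$ with $\#A\in U_1$ and $\#[\neg A]\in U_2$. Since $\mathcal W$ is a chain, $U_1$ and $U_2$ are comparable; taking the larger one, say $U_2$ with $U_1\subseteq U_2$, we obtain $\#A\in U_2$ and $\#[\neg A]\in U_2$, contradicting the consistency of $U_2\in\mathcal P$. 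Hence $\cup\mathcal W$ is consistent, and it is a subset of $D$ as a union of subsets of $D$.

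Neither step presents a genuine obstacle; the entire content is the one move in (a) of using the chain property to collect the two Gödel numbers $\#A$ and $\#[\neg A]$ into a single member of $\mathcal W$. The only points requiring attention are bookkeeping ones: checking that $\cup\mathcal W$ and $\cap\mathcal W$ are subsets of $D$, which is immediate since every member of $\mathcal W$ is, and recording the nonemptiness hypothesis in (b). I would also note in passing that, with $\mathcal P$ ordered by inclusion, part (a) says that every chain in $\mathcal P$ has an upper bound inside $\mathcal P$, namely its union, while part (b) supplies arbitrary nonempty infima; these are precisely the structural features of $\mathcal P$ on which the forthcoming fixed point theorem for $G$ will rely.
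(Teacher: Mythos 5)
Your proof is correct and follows essentially the same route as the paper's: part (a) by using the chain hypothesis to place $\#A$ and $\#[\neg A]$ in a single (comparable, hence larger) member of $\mathcal W$, and part (b) by observing that $\cap\mathcal W$ is contained in some consistent member, so any witness to its inconsistency would contradict that member's consistency. The remarks on nonemptiness and on $\cup\mathcal W,\cap\mathcal W\subseteq D$ are accurate bookkeeping and match the paper's intent.
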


\begin{proof} (a) Assume on the contrary that $\cup \mathcal W$ is not consistent. Then there is a such a sentence $A$ of $\mathcal L$
that  both \#$A$ and \#[$\neg A$] are in $\cup \mathcal W$. Thus
$\mathcal W$ has a member, say $U$, which contains \#$A$,
and a member, say $V$, which contains \#[$\neg A$]. If $\mathcal W$ is a chain, then $U\subseteq V$ or $V\subseteq U$. In former case $V$ and in latter case 
$U$ contains both \#$A$ and \#[$\neg A$]. But this is impossible because  $\mathcal W$ is a subfamily of $\mathcal P$. This proves (a).

(b) The intersection  $\cap \mathcal W$ is a subset of $D$, and is contained in every member of $\mathcal W$.  Hence $\cap \mathcal W$
is consistent, for otherwise there is such a sentence $A$ in $\mathcal L$ that both \#$A$ and \#[$\neg A$] are in $\cap \mathcal W$. Then every member of $\mathcal W$ would also contain both \#$A$ and \#[$\neg A$]. But this is impossible because every member of 
$\mathcal W$ is consistent. This proves (b).
\end{proof}

In the formulation and the proof of our main fixed point theorem we use transfinite sequences of $\mathcal P$ indexed by von Neumann ordinals. Such a sequence $(U_\lambda)_{\lambda\in\alpha}$ of $\mathcal P$ is said to be strictly increasing if 
$U_\mu\subset U_\nu$ whenever $\mu\in\nu\in\alpha$, and strictly decreasing if 
$U_\nu\subset U_\mu$ whenever $\mu\in\nu\in\alpha$. 
A set $V$ of $\mathcal P$ is called  {\em sound} iff $V\subseteq G(V)$. 

The following fixed point theorem is an application of Lemmas \ref{L201}, \ref{L203} and \ref{L204}.

\begin{theorem}\label{T1} 
If $V\in \mathcal P$ is  sound, then there exists the smallest of those  fixed points of $G$  which contain $V$. This fixed point is the last member of the union of those transfinite sequences 
$(U_\lambda)_{\lambda\in\alpha}$ of $\mathcal P$ which satisfy
\begin{itemize}
\item[(C)] $(U_\lambda)_{\lambda\in\alpha}$ is strictly increasing,  
$U_0=V$, and if $0\in\mu\in \alpha$, then
$U_\mu = \underset{\lambda\in\mu}{\bigcup}G(U_\lambda)$.  
\end{itemize}  
\end{theorem}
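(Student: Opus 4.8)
The plan is to construct, by transfinite recursion, the sequence prescribed by condition~(C): set $U_0=V$ and, for $\mu\neq 0$, $U_\mu=\bigcup_{\lambda\in\mu}G(U_\lambda)$. This one formula covers both successor and limit stages, and the whole argument rests on showing that the properties needed to keep the construction inside $\mathcal P$ propagate along the recursion. I would carry three invariants simultaneously: (i) each $U_\lambda\in\mathcal P$; (ii) the sequence is weakly increasing, $U_\mu\subseteq U_\nu$ for $\mu\le\nu$; and (iii) each $U_\lambda$ is sound, $U_\lambda\subseteq G(U_\lambda)$. These must be proved together in a single transfinite induction, since they feed one another: monotonicity is what turns $\{G(U_\lambda):\lambda\in\mu\}$ into a chain, a chain is what Lemma~\ref{L204}(a) needs in order to conclude that the union $U_\mu$ is consistent, and soundness is what forces the inclusions $U_\nu\subseteq U_{\nu+1}$.

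Carrying out that induction: at a successor $\nu+1$, monotonicity (Lemma~\ref{L203}) applied to $U_\lambda\subseteq U_\nu$ for $\lambda\le\nu$ collapses the defining union to $U_{\nu+1}=G(U_\nu)$; this lies in $\mathcal P$ by Lemma~\ref{L201}, the inclusion $U_\nu\subseteq U_{\nu+1}$ is exactly soundness of $U_\nu$, and soundness of $U_{\nu+1}=G(U_\nu)\subseteq G(U_{\nu+1})$ follows again from monotonicity. At a limit $\mu$, the increasing family $\{G(U_\lambda)\}_{\lambda\in\mu}$ is a chain in $\mathcal P$, so Lemma~\ref{L204}(a) gives $U_\mu\in\mathcal P$, while $U_\nu\subseteq U_\mu$ and $U_\mu\subseteq G(U_\mu)$ both follow by pushing the union through the monotone map $G$. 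Hence the recursion never leaves $\mathcal P$ and yields an increasing, sound chain.

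For termination and the fixed point: because each $U_\lambda\subseteq D$, a \emph{strictly} increasing sequence would inject the index ordinals into $\mathcal P(D)$, which is impossible beyond a fixed ordinal bound; so the sequence must stabilise. Let $\gamma$ be the least ordinal with $U_\gamma=G(U_\gamma)$ (equivalently $U_\gamma=U_{\gamma+1}$, using $U_{\gamma+1}=G(U_\gamma)$). For every $\mu<\gamma$ the set $U_\mu$ is sound but not fixed, so $U_\mu\subsetneq G(U_\mu)=U_{\mu+1}\subseteq U_\gamma$; this makes $(U_\lambda)_{\lambda\in\gamma+1}$ strictly increasing, so it satisfies~(C), whereas any longer sequence would repeat the value $U_\gamma$ and fail strictness. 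Thus $(U_\lambda)_{\lambda\in\gamma+1}$ is the maximal, and therefore the union of all, sequences satisfying~(C), and its last member $U_\gamma$ is a fixed point of $G$ containing $V=U_0$.

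To see $U_\gamma$ is the \emph{smallest} fixed point above $V$, I would take any fixed point $W$ with $V\subseteq W$ and show $U_\lambda\subseteq W$ for all $\lambda\le\gamma$ by a routine transfinite induction: the base is $U_0=V\subseteq W$, the successor step is $U_{\nu+1}=G(U_\nu)\subseteq G(W)=W$ by monotonicity, and the limit step passes the inclusion through the union. Hence $U_\gamma\subseteq W$, so $U_\gamma$ is the least fixed point of $G$ containing $V$, which is the assertion of the theorem. I expect the main obstacle to be the \emph{simultaneous} bootstrapping in the first induction: consistency of $U_\mu$ cannot be obtained without already knowing the tail forms an increasing chain, and the increase cannot be obtained without soundness, so all three invariants must travel together in one induction rather than be proved one after another; the termination argument is standard but has to be phrased carefully to license extracting the stabilising ordinal $\gamma$.
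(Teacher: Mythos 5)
Your proposal is correct, and its core --- transfinite iteration of $G$ starting from $V$, with consistency, weak monotonicity and soundness carried as simultaneous invariants through successor and limit stages via Lemmas \ref{L201}, \ref{L203} and \ref{L204}(a), followed by the standard induction for minimality --- matches the paper's argument step for step. Where you genuinely diverge is in how the process is shown to terminate. You run the recursion globally and invoke a cardinality bound: a strictly increasing transfinite chain of subsets of the fixed set $D$ cannot be injective beyond a bounded ordinal, so a least $\gamma$ with $U_\gamma=G(U_\gamma)$ exists. The paper instead never appeals to cardinality: it considers the family of \emph{all} sequences satisfying (C), proves they are pairwise nested (its step (1), which is the uniqueness-of-recursion fact you use implicitly when you call your sequence ``the maximal, and therefore the union of all, sequences satisfying (C)'' --- worth making explicit, since the theorem's statement is about that union), takes their union, and derives a contradiction from the assumption that its length is a limit ordinal, since the union could then be properly extended. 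Your route is arguably more transparent and directly exhibits the stabilizing ordinal; the paper's route is self-contained in the sense that maximality of the union does the work that your Hartogs-style bound does, though both ultimately rest on the same set-theoretic fact that the lengths of such sequences form a set. Neither approach has a gap that affects the conclusion.
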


\begin{proof} Let $V\in\mathcal P$ be sound. 
Transfinite sequences of $\mathcal P$  satisfying (C) are called  $G$-sequences. 
We shall first show that $G$-sequences are nested:
\begin{enumerate}
\item[(1)] {\it Assume that $(U_\lambda)_{\lambda\in\alpha}$ and $(V_\lambda)_{\lambda\in\beta}$ are $G$-sequences, and that $\{U_\lambda\}_{\lambda\in\alpha} \not\subseteq \{V_\lambda\}_{\lambda\in\beta}$.
Then $(V_\lambda)_{\lambda\in\beta}  = (U_\lambda)_{\lambda\in\beta}$}.
\end{enumerate}
By the assumption of (1)  $\mu = \min \{\lambda\in\alpha\mid U_\lambda\not\in\{V_\lambda\}_{\lambda\in\beta}\}$ exists, and 
$\{U_\lambda\}_{\lambda\in\mu} \subseteq \{V_\lambda\}_{\lambda\in\beta}$.  
Properties (C) imply by transfinite induction that $U_\lambda=V_\lambda$ for each $\lambda\in\mu$. To prove that $\mu=\beta$,
 make a counter-hypothesis: $\mu\in\beta$. Since $\mu\in\alpha$ and $U_\lambda=V_\lambda$ for each $\lambda\in\mu$, it follows from properties (C) that   $U_\mu = \underset{\lambda\in\mu}{\bigcup}G(U_\lambda) = \underset{\lambda\in\mu}{\bigcup}G(V_\lambda)=V_\mu$, which is
impossible, since $V_\mu\in \{V_\lambda\}_{\lambda\in\beta}$, but $U_\mu\not\in \{V_\lambda\}_{\lambda\in\beta}$. Consequently, $\mu=\beta$ and $U_\lambda=V_\lambda$ for each $\lambda\in\beta$, whence
 $(V_\lambda)_{\lambda\in\beta} =(U_\lambda)_{\lambda\in\beta}$.

By definition, every $G$-sequence $(U_\lambda)_{\lambda\in\alpha}$ is a function $\lambda\mapsto U_\lambda$ from $\alpha$ into $\mathcal P$. Property (1) implies that these functions are compatible. Thus their union is by \cite[Theorem 2.3.12]{[12]} a function with values in $\mathcal P$, the domain being the union of all index sets  of $G$-sequences. Because these index sets are ordinals, then their union is also an ordinal by \cite[I.8.10]{[Ku]}. Denote it by $\gamma$. The union function can be represented as a sequence $(U_\lambda)_{\lambda\in\gamma}$ of $\mathcal P$. It is strictly increasing as a union of strictly increasing nested sequences.  

To show that $\gamma$ is a successor, assume on the contrary that $\gamma$ is a limit ordinal. Given $\nu\in\gamma$, then $\mu=\nu\cup\{\nu\}$ and $\alpha=\mu\cup\{\mu\}$ are in $\gamma$, and $(U_\lambda)_{\lambda\in\alpha}$ is a $G$-sequence. Denote $U_\gamma = \underset{\lambda\in\gamma}{\bigcup}G(U_\lambda)$.
 $G$ is order preserving  by Lemma \ref{L203}, and $(U_\lambda)_{\lambda\in\gamma}$ is a strictly increasing sequence of $\mathcal P$. Thus  $\{G(U_\lambda)\}_{\lambda\in\gamma}$ is a chain in  $\mathcal P$, whence $U_\gamma$ is consistent by Lemma \ref{L204}(a). Moreover, 
 $U_\nu\subset U_\mu=\underset{\lambda\in\mu}{\bigcup}G(U_\lambda)\subseteq U_\gamma$. This holds for each $\nu\in\gamma$, whence $(U_\lambda)_{\lambda\in\gamma\cup\{\gamma\}}$ is a $G$-sequence. This is impossible, since $(U_\lambda)_{\lambda\in\gamma}$ is the union of all $G$-sequences.
Consequently, $\gamma$ is a successor, say $\gamma=\alpha\cup\{\alpha\}$. Thus $U_\alpha$ is the last member of $(U_\lambda)_{\lambda\in\gamma}$,
$U_\alpha=\max\{U_\lambda\}_{\lambda\in\gamma}$, and $G(U_\alpha)=\max\{G(U_\lambda)\}_{\lambda\in\gamma}$. Moreover, $(U_\lambda)_{\lambda\in\gamma}$ is a $G$-sequence, for otherwise $(U_\lambda)_{\lambda\in\alpha}$ would be the union of all $G$-sequences. In particular,
$U_\alpha=\underset{\lambda\in\alpha}{\bigcup}G(U_\lambda)\subseteq \underset{\lambda\in\gamma}{\bigcup}G(U_\lambda)=G(U_\alpha)$,
so that $U_\alpha\subseteq G(U_\alpha)$. Equality holds, since otherwise the longest $G$-sequence $(U_\lambda)_{\lambda\in\gamma}$ could be extended by  $U_\gamma= \underset{\lambda\in\gamma}{\bigcup}G(U_\lambda)$.
Thus  $U_\alpha$ is a fixed point of $G$ in $\mathcal P$.  

Assume that $W\in\mathcal P$ is a fixed point of $G$, and that $V\subseteq W$. Then $U_0=V\subseteq W$. If $0\in\mu\in\gamma$, and $U_\lambda\subseteq W$ for each $\lambda\in\mu$, then $G(U_\lambda)\subseteq G(W)$ for each $\lambda\in\mu$, whence $U_\mu=\underset{\lambda\in\mu}{\bigcup}G(U_\lambda)\subseteq G(W)=W$. Thus, by transfinite induction, $U_\mu\subseteq W$ for each $\mu\in\gamma$. Thus $U_\alpha\subseteq W$ so that $U_\alpha$ is the smallest  fixed point of $G$ that contains $V$.
\end{proof}

As a consequence of Theorem \ref{T1} we obtain.

\begin{corollary}\label{C1} $G$ has the smallest  fixed point.
\end{corollary}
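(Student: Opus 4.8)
The plan is to obtain Corollary \ref{C1} as the special case $V=\emptyset$ of Theorem \ref{T1}. To invoke that theorem I must verify its two hypotheses for the choice $V=\emptyset$: that $\emptyset\in\mathcal P$ and that $\emptyset$ is sound. The first holds because the empty set is consistent, as already noted in Section \ref{S3}. The second requires $\emptyset\subseteq G(\emptyset)$, which is immediate since the empty set is a subset of every set; hence $\emptyset$ is sound in the sense defined before Theorem \ref{T1}.

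Applying Theorem \ref{T1} with $V=\emptyset$ therefore produces the smallest fixed point of $G$ that contains $\emptyset$; call it $U_\alpha$. It then remains only to discharge the qualifier ``that contains $\emptyset$''. Since $\emptyset\subseteq W$ for every subset $W$ of $D$, the side condition $V\subseteq W$ with $V=\emptyset$ is satisfied by every member of $\mathcal P$, and in particular by every fixed point of $G$. Consequently the smallest fixed point of $G$ containing $\emptyset$ coincides with the smallest fixed point of $G$ outright, and $U_\alpha$ is that fixed point, which establishes the corollary.

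There is essentially no real obstacle here: the whole substance of the corollary is already delivered by Theorem \ref{T1}, and the only work is the pair of trivial verifications (consistency and soundness of $\emptyset$) together with the observation that ``containing $\emptyset$'' is a vacuous constraint. The one point I would be careful to state explicitly is \emph{why} $\emptyset$ belongs to the domain $\mathcal P$ in the first place, namely its consistency; everything else is a direct specialization.
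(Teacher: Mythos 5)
Your proposal is correct and follows essentially the same route as the paper: specialize Theorem \ref{T1} to $V=\emptyset$, note that $\emptyset$ is consistent and trivially sound, and observe that the constraint of containing $\emptyset$ is vacuous, so the resulting fixed point is the smallest one outright. The paper's own proof is just a more compressed version of exactly this argument.
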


\begin{proof}  The empty set $\emptyset$ is both sound and consistent.  Thus the smallest  fixed point of $G$  that contains $\emptyset$ exists by  Theorem \ref{T1}. It is the smallest  fixed point of $G$, since every fixed point of $G$ contains $\emptyset$.  
\end{proof}

Next we prove  that every consistent subset of $D$ has the greatest sound and consistent subset.

\begin{theorem}\label{T2}  The equation $V=W\cap G(V)$ has for each consistent subset $W$  of $D$ the greatest solution $V$ in $\mathcal P$.  It is the greatest sound set of $\mathcal P$ that is contained in $W$. $V$ is the last member of
the union of those transfinite sequences 
$(V_\lambda)_{\lambda\in\alpha}$ of $\mathcal P$ which satisfy
\begin{itemize}
\item[(D)] $(V_\lambda)_{\lambda\in\alpha}$ is strictly decreasing,  
$V_0=W$, and if $0\in\mu\in \alpha$, then
$V_\mu = W\cap(\underset{\lambda\in\mu}{\bigcap}G(V_\lambda))$.  
\end{itemize}  
\end{theorem}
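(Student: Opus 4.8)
The plan is to run this as the exact order-theoretic dual of Theorem \ref{T1}, replacing ``strictly increasing'' by ``strictly decreasing'', ``union'' by ``intersection'', and ``smallest fixed point above $V$'' by ``greatest sound set below $W$''. Two preliminary observations make the dual setup legitimate. First, every term $V_\mu$ produced by (D) satisfies $V_\mu\subseteq W$, and since a subset of a consistent set is consistent, each $V_\mu$ lies in $\mathcal P$ automatically; thus consistency is here inherited directly from $W$ rather than having to be recovered from Lemma \ref{L204}(a), although Lemma \ref{L201} and Lemma \ref{L204}(b) do guarantee that the intersections $\bigcap_{\lambda\in\mu}G(V_\lambda)$ are themselves consistent. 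Second, I would note that ``solution of $V=W\cap G(V)$'' and ``sound set contained in $W$'' almost coincide: any solution is clearly sound and contained in $W$; conversely, for a sound $V\subseteq W$ put $V'=W\cap G(V)$, so that $V\subseteq V'$ and, by monotonicity (Lemma \ref{L203}), $V'=W\cap G(V)\subseteq G(V)\subseteq G(V')$, making $V'$ again a sound set contained in $W$. Consequently the greatest sound set contained in $W$, once shown to exist, is itself a solution, and is automatically the greatest solution because every solution is such a set.

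Next I would carry out the transfinite construction precisely as in Theorem \ref{T1}. Calling the sequences satisfying (D) the $G$-sequences, and using that the clause $V_\mu=W\cap\bigcap_{\lambda\in\mu}G(V_\lambda)$ depends only on earlier terms, I would establish by transfinite induction the nesting property dual to claim (1): any two $G$-sequences agree on their common domain, so one is a restriction of the other. Their common extension is a single function on an ordinal $\gamma$, yielding a strictly decreasing sequence $(V_\lambda)_{\lambda\in\gamma}$ with all terms in $\mathcal P$. The central step is to show $\gamma$ is a successor: assuming $\gamma$ is a limit, set $V_\gamma=W\cap\bigcap_{\lambda\in\gamma}G(V_\lambda)$; consistency is immediate from $V_\gamma\subseteq W$, and for strictness, given $\nu\in\gamma$ take $\mu=\nu\cup\{\nu\}\in\gamma$, so $V_\gamma\subseteq V_\mu\subset V_\nu$ and hence $V_\gamma\subset V_\nu$ for every $\nu\in\gamma$. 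This makes $(V_\lambda)_{\lambda\in\gamma\cup\{\gamma\}}$ a $G$-sequence, contradicting the maximality of $(V_\lambda)_{\lambda\in\gamma}$. Thus $\gamma=\alpha\cup\{\alpha\}$ and $V_\alpha$ is the last member.

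Then I would verify that $V_\alpha$ solves the equation and is greatest. Since $G$ is order preserving and $V_\alpha=\min\{V_\lambda\}_{\lambda\in\gamma}$, I get $G(V_\alpha)=\bigcap_{\lambda\in\gamma}G(V_\lambda)$ (one inclusion from monotonicity, the other because $\alpha\in\gamma$), so the forbidden next term would be $V_\gamma=W\cap G(V_\alpha)\subseteq V_\alpha$. Were this inclusion strict, $(V_\lambda)_{\lambda\in\gamma\cup\{\gamma\}}$ would be a strictly decreasing $G$-sequence longer than the maximal one, which is impossible; hence $V_\alpha=W\cap G(V_\alpha)$, a solution, and in particular sound and contained in $W$. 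For greatness, given any sound $V\subseteq W$ I would show $V\subseteq V_\lambda$ for all $\lambda\in\gamma$ by transfinite induction: $V\subseteq W=V_0$, and if $V\subseteq V_\lambda$ for all $\lambda\in\mu$ then soundness together with Lemma \ref{L203} gives $V\subseteq G(V)\subseteq G(V_\lambda)$ for each such $\lambda$, whence $V\subseteq W\cap\bigcap_{\lambda\in\mu}G(V_\lambda)=V_\mu$. Taking $\lambda=\alpha$ yields $V\subseteq V_\alpha$, so $V_\alpha$ is the greatest sound set contained in $W$ and, by the first paragraph, the greatest solution of $V=W\cap G(V)$.

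I expect no genuinely new idea is needed, since the argument tracks Theorem \ref{T1} line by line. The one place demanding care is the limit-ordinal step, where I must confirm that intersecting with $W$ at every stage still forces strict decrease (so that the downward construction does not stall prematurely), together with the matching maximality argument that pins down the fixed-point equation $V_\alpha=W\cap G(V_\alpha)$.
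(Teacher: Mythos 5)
Your proof is correct and follows essentially the same route as the paper: the paper likewise runs the order-theoretic dual of Theorem \ref{T1}, forms the union of all sequences satisfying (D), shows its last member $V$ satisfies $V=W\cap G(V)$ by the same cannot-be-extended argument, and establishes that it is greatest by the same transfinite induction using Lemma \ref{L203}. Your two preliminary observations---that consistency of each $V_\mu$ is inherited directly from $V_\mu\subseteq W$, and that the greatest sound subset of $W$ is automatically the greatest solution of the equation---are minor streamlinings of points the paper handles via Lemmas \ref{L201} and \ref{L204}(b) or leaves implicit.
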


\begin{proof} Assume that $W$ is a consistent subset of $D$. Like the proof that the union of all $G$-sequences is a  $G$-sequence one can prove that the union of all transfinite sequences which have properties (D) given in Theorem \ref{T2} has property (D). Let  $(V_\lambda)_{\lambda\in\gamma}$
 be that sequence.
Denote $V=W\cap(\underset{\lambda\in\gamma}{\bigcap}G(V_\lambda))$. Because $W$ and the sets $V_\lambda$, $\lambda\in\gamma$, are consistent, it follows from Lemma \ref{L201} and Lemma \ref{L204}(b) that $V$ is consistent. Moreover, $V\subseteq V_\lambda$ for each $\lambda\in\gamma$.
If $V\subset V_\lambda$ for each $\lambda\in\gamma$, then the choice  $V_\lambda=V$ implies that $(V_\lambda)_{\lambda\in\gamma\cup\{\gamma\}}$ satisfies (D) when $\alpha=\gamma\cup\{\gamma\}$. But this is impossible because of the choice of  $(V_\lambda)_{\lambda\in\gamma}$. Thus 
$V = \min\{V_\lambda\}_{\lambda\in\gamma}$, and $V$ is the last member of $(V_\lambda)_{\lambda\in\gamma}$ because this sequence is strictly decreasing. Since $G$ is order preserving, then $G(V) = \min\{G(V_\lambda)\}_{\lambda\in\gamma}=\underset{\lambda\in\gamma}{\bigcap}G(V_\lambda)$.
Thus $V = W\cap G(V)$, so that $V\subseteq G(V)$, i.e., $V$ is sound and is contained in $W$.

Assume that $U$ is consistent, that $U\subseteq G(U)$, and that $U\subseteq W$. Since $V_0=W$ by (D$_\gamma$), then $U\subseteq V_0$. If $0\in\mu\in\gamma$ and $U\subseteq V_\lambda$ for each $\lambda\in\mu$, then  $G(U)\subseteq G(V_\lambda)$ for each $\lambda\in\mu$, whence
$U\subseteq W\cap G(U)\subseteq W\cap(\underset{\lambda\in\mu}{\bigcap}G(V_\lambda))=V_\mu$. Thus, by transfinite induction, $U\subseteq V_\lambda$ for each $\lambda\in\gamma$, so that $U\subseteq \min\{V_\lambda\}_{\lambda\in\gamma}=V$. Consequently, $V$ is the greatest sound and consistent subset of $D$ that is contained in $W$.
\end{proof}

The following results shows that if $W$ is any consistent set of G\"odel numbers of sentences of $L$, and $V$ is determined by Theorem \ref{T2}, then
the union of the $G$- sequences (C) of Theorem \ref{T1} is the smallest fixed point of $G$.

\begin{corollary}\label{C2} Let $W$ be a consistent subset of $D$ containing only  G\"odel numbers of sentences of $L$. Then

(a) $V= W\cap G(W)$ is the greatest consistent solution of $V=V\cap G(V)$  contained in $W$.

(b) The union of the $G$- sequences (C) of $V= W\cap G(W)$ is the smallest fixed point of $G$.
\end{corollary}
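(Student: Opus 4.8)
The plan is to reduce both parts to one structural fact about rule (r1): when $A$ is a sentence of $L$, whether \#$A$ lies in $G(U)$ depends only on whether $A$ is true in $M$, and is therefore independent of the argument $U$. Since $W$ contains only G\"odel numbers of sentences of $L$, this fact will make $W\cap G(W)$ sound and will force it inside every fixed point of $G$.

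For part (a), note first that $V=W\cap G(W)$ is consistent, being a subset of the consistent set $W$, so $V\in\mathcal P$. I would then check that $V$ is sound, i.e. $V\subseteq G(V)$. If \#$A$ is in $W\cap G(W)$, then \#$A$ being in $W$ forces $A$ to be a sentence of $L$, while \#$A$ being in $G(W)$ gives, by rule (r1), that $A$ is true in $M$. Applying rule (r1) again with argument $V$, the truth of $A$ in $M$ yields that \#$A$ is in $G(V)$; hence $V\subseteq G(V)$. Soundness is exactly the equation $V=V\cap G(V)$, so $V$ solves that equation and lies in $W$. To see that it is the greatest such solution, let $V^*$ be the greatest sound consistent subset of $W$ furnished by Theorem \ref{T2}, which satisfies $V^*=W\cap G(V^*)$. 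Since $V^*\subseteq W$, order preservation (Lemma \ref{L203}) gives $G(V^*)\subseteq G(W)$, so $V^*=W\cap G(V^*)\subseteq W\cap G(W)=V$; conversely $V$ is sound and contained in $W$, so maximality of $V^*$ gives $V\subseteq V^*$. Thus $V=V^*$ is the greatest consistent solution of $V=V\cap G(V)$ contained in $W$.

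For part (b), part (a) supplies a sound and consistent set $V=W\cap G(W)$, so Theorem \ref{T1} applies: the union of the $G$-sequences (C) starting from $V$ is the smallest fixed point $U_V$ of $G$ that contains $V$. Writing $U_{\min}$ for the smallest fixed point of $G$ (Corollary \ref{C1}), one inclusion is immediate, since $U_V$ is a fixed point and $U_{\min}$ is least, so $U_{\min}\subseteq U_V$. For the reverse I would show $V\subseteq U_{\min}$: any \#$A$ in $V$ is the G\"odel number of a sentence of $L$ that is true in $M$, and since $U_{\min}=G(U_{\min})$, rule (r1) gives that \#$A$ is in $G(U_{\min})=U_{\min}$. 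Hence $U_{\min}$ is a fixed point containing $V$, so $U_V\subseteq U_{\min}$, and the two fixed points coincide.

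The essential point --- and the only place the hypothesis that $W$ consists solely of G\"odel numbers of sentences of $L$ is used --- is the argument independence of $G$ on sentences of $L$ coming from rule (r1). Once this is isolated, both parts follow quickly from Theorems \ref{T1} and \ref{T2} together with Lemma \ref{L203}, and I expect no real difficulty beyond this observation, since the transfinite constructions have already been carried out in those theorems.
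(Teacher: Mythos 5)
Your proposal is correct and follows essentially the same route as the paper: both parts hinge on the observation that, by rule (r1), membership of an $L$-sentence's G\"odel number in $G(U)$ depends only on truth in $M$ and not on $U$, which makes $W\cap G(W)$ sound and places it inside every fixed point, after which Theorems \ref{T1} and \ref{T2} do the rest. Your write-up is in fact slightly more explicit than the paper's at two points (the comparison with the maximal sound set $V^*$ in (a), and the two-inclusion argument identifying $U_V$ with the smallest fixed point in (b)), where the paper instead routes the soundness and minimality claims through the single inclusion $V\subset G(\emptyset)\subseteq G(U)$.
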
 

\begin{proof} (a) Let $W$ be a consistent set of  G\"odel numbers of sentences of $L$.  $G(W)$ contains by rule (r1) G\"odel numbers of those  sentences of $L$ which are true in $M$, and only those. Thus $V= W\cap G(W)$ is the set of  G\"odel numbers of those sentences of $W$ which are true in $M$. Hence $V\subset G(\emptyset)\subseteq G(V)$, so that $V$ is sound. It is also consistent, as an intersection of two consistent sets. Consequently, $V= W\cap G(W)$ is a solution of $V=V\cap G(V)$, and by Theorem \ref{T2} the greatest solution contained in $W$.  
\smallskip

(b) $V=W\cap G(W)$ is by (a) sound and consistent, and  $V\subset G(\emptyset)$. If $U$ is a fixed point of $G$, then $V\subset G(\emptyset)\subset G(U)=U$. Thus $V$ is contained in the smallest fixed point of $G$. By Theorem \ref{T1}, the union of the $G$- sequences (C) is the smallest fixed point of $G$ that contains $V$. This proves (b).
\end{proof}

\begin{remark}\label{R51}
The smallest members of  $(U_\lambda)_{\lambda\in\alpha}$ satisfying (C) are $n$-fold iterations $U_n=G^n(V)$, $n\in\mathbb N=\{0,1,\dots\}$. If they form a strictly increasing sequence, the next member $U_\omega$ is their union, $U_{\omega+n}= G^n(U_\omega)$, $n\in\mathbb N$, and so on.
\smallskip

If the set $W$ is finite, then
the longest sequence $(V_\lambda)_{\lambda\in\alpha}$ satisfying (D) is obtained by the finite algorithm:  
 $V_0=W$. For $n$ from $0$ while $V_n\ne W\cap G(V_n)$ do: $V_{n+1}=W\cap G(V_n)$.

Zorn's Lemma, together with Lemmas \ref{L203} and \ref{L204}, can be applied, e.g., as in  \cite{[6],[10]}, to prove results of Theorem \ref{T1} and Corollary \ref{C1}, and also the existence of maximal fixed points of $G$. ZF set theory is no more sufficient framework for such proofs, since Zorn's Lemma is equivalent to the Axiom of Choice. 
The result of Theorem \ref{T2} that gives a method to determine sound subsets of $D$ seems to be new.

As for generalizations of Theorems \ref{T1} and \ref{T2}, see, e.g., \cite{[3],[9]}. 

\end{remark}

\smallskip

{\bf Acknowledgements:} The author is indebted to Prof. Hannes Leitgeb and to Ph.D. Tapani Hyttinen for valuable discussions on the subject. The present work is influenced by  \cite{[10]}.
\smallskip

\baselineskip12pt

\end{document}